\let\L\relax
\DeclareMathOperator{\L}{\mathcal{L}}
\DeclareMathOperator{\tr}{\mathrm{tr}}
\DeclareMathOperator{\Ei}{\mathrm{Ei}}
\newcommand{\beq}{\begin{equation}}
\newcommand{\eeq}{\end{equation}}
\newcommand{\bea}{\begin{array}}
\newcommand{\eea}{\end{array}}
\title{Weighted de Bruijn Graphs for the Menage Problem and Its Generalizations\thanks{The work is supported by the National Science Foundation under grant No. IIS-1462107.}}
\titlerunning{Menage Problem and Its Generalizations}
\author{Max A. Alekseyev}
\institute{The George Washington University, Washington, DC, USA\\\href{mailto:maxal@gwu.edu}{\texttt{maxal@gwu.edu}}}
\date{}
\begin{document}

\maketitle
\thispagestyle{plain}

\begin{abstract}
We address the problem of enumeration of seating arrangements of married couples around a circular table such that no spouses sit next to each other
and no $k$ consecutive persons are of the same gender.
While the case of $k=2$ corresponds to the classical \emph{probl\`eme des m\'enages} with a well-studied solution, 
no closed-form expression for the number of seating arrangements is known when $k\geq 3$.

We propose a novel approach for this type of problems based on enumeration of walks in certain algebraically weighted de Bruijn graphs.
Our approach leads to new expressions for the menage numbers and their exponential generating function 
and allows one to efficiently compute the number of seating arrangements in general cases, which we illustrate in detail for the ternary case of $k=3$.
\end{abstract}

\section{Introduction}

The famous \emph{menage problem} (\emph{probl\`eme des m\'enages}) asks for the number $M_n$ of seating arrangements of $n$ married couples of opposite sex around a circular table 
such that
\begin{enumerate}
\item no spouses sit next to each other;
\item females and males alternate.
\end{enumerate}
The problem was formulated by Edouard Lucas in 1891~\cite{Lucas1891}. A complete solution was first obtained by Touchard in 1934~\cite{Touchard1934}.

Let us call a couple seating next to each other \emph{close}.
The restriction of the menage problem can be equivalently stated as
\begin{enumerate}
\item there are no close couples;
\item no $k=2$ consecutive people are of the same sex.
\end{enumerate}
This reformulation allows us to generalize the menage problem to other values of $k$, such as $k=3$ which we refer to as the \emph{ternary menage problem}.
The ternary menage problem was posed by Hugo Pfoertner in 2006 as the sequence \texttt{A114939} in the OEIS~\cite{OEIS}, for which he then managed to correctly compute only the first three terms.

In this work, we propose a novel approach for the generalized menage problem based on the transfer-matrix method~\cite[Section 4.7]{Stanley1997} applied to certain algebraically weighted de Bruijn graphs.
We illustrate our approach on the classical case $k=2$, where we obtain new formulae for the menage numbers $M_n$ and their exponential generating function (EGF).
While an explicit expression (in terms of the modified Bessel functions) for the EGF was earlier derived by Wyman and Moser~\cite{Wyman1958},
they admitted it be ``quite complicated''. In contrast, our expression (and its derivation) is much simpler and can be stated in terms of a certain power series or the exponential integral function.
We further apply our approach for the ternary case $k=3$, which apparently has not been addressed in the literature before.
While the resulting formulae in this case are not that simple, they provide an efficient method for computing the corresponding number of seating arrangements,
which we used to compute many new terms for \texttt{A114939} and related sequences in the OEIS.

\section{Classical Approaches for Menage Problem}\label{Sec:AppMen}

To the best of our knowledge, there exist three major approaches for solving the menage problem, which we briefly discuss below.

\paragraph*{Ladies First.} 
A straightforward approach to the menage problem is first to seat all ladies (in $2\cdot n!$ ways) and then to seat all gentlemen, obeying the close couple restriction. 
This way the problem reduces to enumerating placements of 
non-attacking rooks on a square board like the one shown in Fig.~\ref{Fig:Board}.
Using the rook theory~\cite[Section 2.3]{Stanley1997}, this leads to the \emph{Touchard formula}:
\beq\label{eq:Touchard}
M_n = 2 \cdot n!\cdot \sum_{k=0}^n (-1)^k \frac{2n}{2n-k} {2n-k\choose k} (n-k)!\,.
\eeq

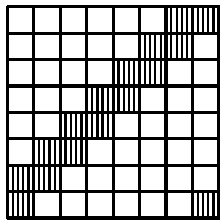
\begin{figure}[t]
\begin{center}
\begin{picture}(80,80)
\thicklines
\multiput(0,0)(10,0){9}{\line(0,1){80}}
\multiput(0,0)(0,10){9}{\line(1,0){80}}
\thinlines
\multiput(0,0)(2,0){5}{\line(0,1){20}}
\multiput(10,10)(2,0){5}{\line(0,1){20}}
\multiput(20,20)(2,0){5}{\line(0,1){20}}
\multiput(30,30)(2,0){5}{\line(0,1){20}}
\multiput(40,40)(2,0){5}{\line(0,1){20}}
\multiput(50,50)(2,0){5}{\line(0,1){20}}
\multiput(60,60)(2,0){5}{\line(0,1){20}}
\multiput(70,70)(2,0){5}{\line(0,1){10}}
\multiput(70,00)(2,0){5}{\line(0,1){10}}
\end{picture}
\end{center}
\caption{A board corresponding to the menage problem with $n=8$ couples. 
For a fixed seating arrangement of ladies, the seating arrangements of gentlemen are in one-to-one correspondence with the placements of $n$ non-attacking rooks at non-shaded cells.}
\label{Fig:Board}
\end{figure}

\paragraph*{Hamiltonian Cycles in Crown Graphs.}
The seating arrangements satisfying the menage problem 
correspond to directed Hamiltonian cycles in the \emph{crown graph} on $2n$ vertices
obtained from the complete bipartite graph $K_{n,n}$ with removal of a perfect matching. 
Here males/females represent the partite sets of $K_{n,n}$ with every male vertex connected to every female vertex, except for the spouses (corresponding to 
the removed perfect matching).
For odd integers $n$, crown graphs on $2n$ vertices represent circulant graphs, where Hamiltonian cycles can be systematically enumerated~\cite{Golin2005}.

\paragraph*{Non-Sexist Inclusion-Exclusion.}
Bogart and Doyle~\cite{Bogart1986} suggested to compute $M_n$ with the inclusion-exclusion principle (e.g., see \cite[Section 2.1]{Stanley1997}) 
as the number of alternating male-female seating arrangements that have no close couples. 
To do so, they computed the number $W_{n,j}$ of alternating male-female seating arrangements of $n$ couples with $j$ fixed couples being close as
\beq\label{eq:Wnj}
W_{n,j} = 2\cdot \frac{2n}{2n-j}\binom{2n-j}{j}\cdot j!\cdot (n-j)!^2\,,
\eeq
where:
\begin{itemize}
\item the factor $2$ accounts for two ways to reserve alternating seats for males and females;
\item $\frac{2n}{2n-j}\binom{2n-j}{j}$ is the number of ways to select $2j$ seats for the $j$ close couples;
\item $j!$ is the number of seating arrangements of the $j$ close couples at the $2j$ selected seats;
\item $(n-j)!^2 = (n-j)!\cdot (n-j)!$ is the number of ways to seat females and males from the other $n-j$ couples.
\end{itemize}
The inclusion-exclusion principle then implies that
\beq\label{eq:Mn1}
\begin{split}
M_n &= \sum_{j=0}^n (-1)^j\cdot \binom{n}{j}\cdot W_{n,j} \\
&= 2\cdot \sum_{j=0}^n (-1)^j\cdot \binom{n}{j}\cdot \frac{2n}{2n-j}\binom{2n-j}{j}\cdot j!\cdot (n-j)!^2\,,
\end{split}
\eeq
which trivially simplifies to \eqref{eq:Touchard}.

The aforementioned approaches for the menage problem 
do not seem to easily extend to the ternary case, 
since there is no nice male-female alternating structure anymore. In particular, the ladies-first approach does not reduce the problem to a uniform board
and there is no obvious reduction to a Hamiltonian cycle problem.
The (non-sexist) inclusion-exclusion approach is most prominent, but it is unclear what should be in place of $\frac{2n}{2n-j}\binom{2n-j}{j}$.
In order to generalize the solution to the menage problem to the ternary case, we suggest to look at this problem at a different angle as described below.

\section{De Bruijn Graph Approach for Menage Problem}

So far, a seating arrangement in the menage problem was viewed as a cyclic (clockwise) sequence of females ($f_i$) and males ($m_j$):
$$f_{i_1} \to m_{j_1} \to f_{i_2} \to m_{j_2} \to \dots \to f_{i_n} \to m_{j_n} \to f_{i_1}\,.$$
However, it can also be viewed as a cyclic sequence of \emph{pairs} of people sitting next to each other:
$$(f_{i_1},m_{j_1}) \to (m_{j_1},f_{i_2}) \to (f_{i_2},m_{j_2}) \to \dots \to (f_{i_n},m_{j_n}) \to (m_{j_n},f_{i_1}) \to (f_{i_1},m_{j_1})\,.$$

A similar idea was used by Nicolaas de Bruijn~\cite{deBruijn1946} to construct a shortest sequence, which contains every subsequence of length $\ell$ (called \emph{$\ell$-mer}) over a given alphabet.
He introduced directed graphs, now named after him, whose nodes represent $(\ell-1)$-mers and arcs represent $\ell$-mers 
(the arc corresponding to an $\ell$-mer $s$ connects the prefix of $s$ with the suffix of $s$).

We employ de Bruijn graphs for $\ell=3$ for solving the menage problem. However, in contrast to conventional \emph{unweighted} de Bruijn graphs, 
we introduce algebraic weights to account for (i) the balance between females and males; and (ii) the number of close couples.

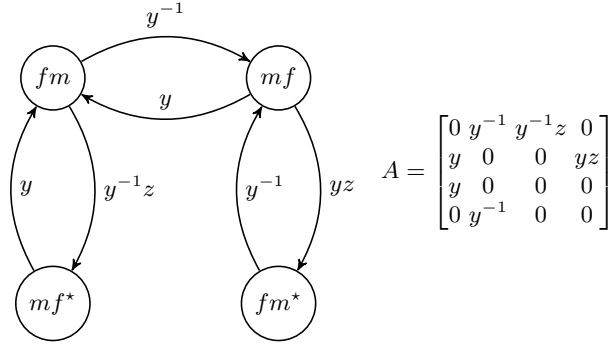
\begin{figure}[t]
\begin{center}
\begin{tabular}{ccc}
\begin{tabular}{c}
\begin{tikzpicture}[->,>=stealth',auto,node distance=3cm,semithick]

  \node[state] (fm)                    {$fm$};
  \node[state] (MF) [below of=fm] {$mf^{\star}$};
  \node[state] (mf) [right of=fm] {$mf$};
  \node[state] (FM) [below of=mf] {$fm^{\star}$};

  \path (fm) edge [bend left] node {$y^{-1}$} (mf)
        (mf) edge [bend left] node[swap] {$y$} (fm)
        (fm) edge [bend left] node {$y^{-1}z$} (MF)
        (mf) edge [bend left] node {$yz$} (FM)
        (MF) edge [bend left] node[swap] {$y$} (fm)
        (FM) edge [bend left] node[swap] {$y^{-1}$} (mf);
\end{tikzpicture}
\end{tabular}
&
\qquad
&
$
A = \begin{bmatrix} 
0 & y^{-1} & y^{-1}z & 0\\
y & 0 & 0 & yz\\
y & 0 & 0 & 0\\
0 & y^{-1} & 0 & 0
\end{bmatrix}
$
\end{tabular}
\end{center}
\caption{The weighted de Bruijn graph for the menage problem and its adjacency matrix $A$.}
\label{fig:deBMen}
\end{figure}

The (weighted) de Bruijn graph for the menage problem and its adjacency matrix $A$ are shown in Fig.~\ref{fig:deBMen}.
This graph has $4$ nodes labeled $fm$ (for clockwise adjacent female--male pair), $mf$ (clockwise adjacent male--female pair), and their starred variants indicating close couples.
There is an arc connecting every pair of nodes $uv$ and $vw$ (at most one of which may be starred) for $u,v,w\in\{f,m\}$.
Each such arc has an algebraic weight $y^pz^q$ with $p=\pm 1$ and $q\in\{0,1\}$ such that the degree of indeterminate $y$ accounts for the males-females balance, 
while the degree of indeterminate $z$ accounts for the number of close couples. Namely, $p=1$ whenever $w=m$ (and $p=-1$ whenever $w=f$), while $q=1$ whenever $vw$ is starred.

Any seating arrangement corresponds to a cyclic sequence of nodes $fm$ and $mf$, some of which may be starred to indicate close couples.
Such sequence with $j$ close couples corresponds to a walk of length $2n$ and algebraic weight $y^0z^j$.
The transfer-matrix method~\cite[Section 4.7]{Stanley1997} implies that the number of such walks equals $[y^0z^j]\ \tr(A^{2n})$, i.e., the coefficient of $y^0z^j$ in the trace of matrix $A^{2n}$.
This leads to a new formula for $W_{n,j}$:
$$
W_{n,j} = [y^0z^j]\ \tr(A^{2n})\cdot j!\cdot (n-j)!^2\,,
$$
where the factors $j!$ and $(n-j)!^2$ bear the same meaning as in \eqref{eq:Wnj}. 
Similarly to \eqref{eq:Mn1}, we then obtain
\beq\label{eq:Mnmatrix}
M_n = n!\cdot \sum_{j=0}^n (-1)^j\cdot (n-j)!\cdot [y^0z^j]\ \tr(A^{2n})\,.
\eeq
Comparison of \eqref{eq:Mnmatrix} and \eqref{eq:Touchard} suggests the following identity, which we will prove explicitly:
\begin{lemma}\label{th:Ay0zj}
For the matrix $A$ defined in Fig.~\ref{fig:deBMen} and any integers $n>1$, $j\geq 0$,
$$[y^0 z^j]\ \tr(A^{2n}) = 2\cdot \frac{2n}{2n-j}\cdot \binom{2n-j}{j}\,.$$
\end{lemma}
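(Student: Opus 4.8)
The plan is to eliminate the variable $y$ by a structural observation, reduce to a pure spectral computation in $z$, and then read off the $z$-coefficients from a classical power-sum identity whose coefficients are exactly the claimed right-hand side.

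First I would dispose of $y$. Since the de Bruijn graph strictly alternates between $fm$-type nodes ($fm,fm^{\star}$) and $mf$-type nodes ($mf,mf^{\star}$), every closed walk of length $2n$ passes through $n$ nodes of each type. An arc contributes $y^{+1}$ or $y^{-1}$ according to whether its target is $fm$-type or $mf$-type, so the total $y$-weight of any closed walk is $y^{\,n-n}=y^{0}$. Hence $\tr(A^{2n})$ is a polynomial in $z$ alone and $[y^{0}z^{j}]\,\tr(A^{2n})=[z^{j}]\,\tr(B^{2n})$, where $B:=A|_{y=1}$. (Equivalently, writing $A=B\,\mathrm{diag}(y,y^{-1},y^{-1},y)$, the bipartite vanishing pattern of $B$ shows $A$ is diagonally similar to $B$.) It then remains to prove $[z^{j}]\,\tr(B^{2n})=2\cdot\frac{2n}{2n-j}\binom{2n-j}{j}$.

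Next I would compute the characteristic polynomial of $B$ by a direct $4\times4$ expansion, expecting the biquadratic
\[
\det(\lambda I-B)=\lambda^{4}-(1+2z)\lambda^{2}+z^{2}.
\]
The crucial move is to factor this as $(\lambda^{2}-\alpha^{2})(\lambda^{2}-\beta^{2})$, where $\alpha,\beta$ are the two roots of $t^{2}-t-z=0$: then $\alpha+\beta=1$ and $\alpha\beta=-z$, so $\alpha^{2}+\beta^{2}=(\alpha+\beta)^{2}-2\alpha\beta=1+2z$ and $\alpha^{2}\beta^{2}=z^{2}$, matching the coefficients. Thus the eigenvalues of $B$ are exactly $\pm\alpha,\pm\beta$, and raising to the even power $2n$ collapses the sign pairs to give $\tr(B^{2n})=2(\alpha^{2n}+\beta^{2n})$. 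Finally, the Girard--Waring power-sum identity applied to $t^{2}-t-z$ (i.e. $p=1$, $q=-z$) yields
\[
\alpha^{2n}+\beta^{2n}=\sum_{j=0}^{n}(-1)^{j}\,\frac{2n}{2n-j}\binom{2n-j}{j}\,1^{\,2n-2j}(-z)^{j}=\sum_{j=0}^{n}\frac{2n}{2n-j}\binom{2n-j}{j}z^{j},
\]
where the two sign factors $(-1)^{j}$ and $(\alpha\beta)^{j}=(-1)^{j}z^{j}$ cancel; reading off $[z^{j}]$ and multiplying by $2$ gives the assertion.

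The main obstacle is the middle step: recognizing that the target coefficient $\frac{2n}{2n-j}\binom{2n-j}{j}$ is itself a Girard--Waring coefficient, and therefore engineering the factorization of the characteristic polynomial so that the spectrum takes the form $\pm\alpha,\pm\beta$ with $\alpha+\beta=1$ and $\alpha\beta=-z$. Once the correct quadratic $t^{2}-t-z$ is identified, the rest is routine; if one prefers to avoid invoking Girard--Waring, the same coefficients can instead be verified by induction using the recurrence $P_{n}=(1+2z)P_{n-1}-z^{2}P_{n-2}$ satisfied by $P_{n}=\alpha^{2n}+\beta^{2n}$.
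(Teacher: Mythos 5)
Your proof is correct, and it diverges from the paper's own proof in instructive ways. Both arguments share the same spectral core: reduce $\tr(A^{2n})$ to the power sum $\alpha^{2n}+\beta^{2n}$ of the roots of $t^2-t-z$. But the paper gets there by simply asserting the eigenvalues of $A$ and then finishes via Catalan generating functions: it writes $\beta=-zC(-z)$ and $\alpha=C(-z)^{-1}$, argues that $\beta^{2n}$ contributes nothing to $[z^j]$ because $j\le n<2n$, and extracts $[z^j]\,C(-z)^{-2n}$ from the formula $[x^k]\,C(x)^m=\frac{m}{2k+m}\binom{2k+m}{m}$ applied with the negative exponent $m=-2n$, followed by binomial manipulation. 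You instead (i) justify the disappearance of $y$ by the bipartite walk-counting argument (the paper relegates this to a footnote observing that $A^2$ is $y$-free), (ii) compute the characteristic polynomial $\lambda^4-(1+2z)\lambda^2+z^2$ explicitly and factor it, and (iii) finish with the Girard--Waring identity, which produces the whole polynomial $\sum_j \frac{2n}{2n-j}\binom{2n-j}{j}z^j$ in one stroke, with the sign cancellation transparent. Your route is more self-contained (Girard--Waring can be replaced by induction on the recurrence you state) and it makes the vanishing of the coefficients for $j>n$ automatic rather than a side effect. One further point in your favor: your spectrum $\{\alpha,-\alpha,\beta,-\beta\}$ is the correct one, whereas the paper's claim that $\alpha,\beta$ each occur with multiplicity $2$ is inaccurate --- it would force $\tr(A)=2(\alpha+\beta)=2$, while in fact $\tr(A)=0$. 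That slip is harmless in the paper because only even powers are used, and $(\pm\alpha)^{2n}$, $(\pm\beta)^{2n}$ collapse to the same values either way, but your version is the one that survives scrutiny at odd powers as well.
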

\begin{proof}
The eigenvalues of $A$ are $\frac{1\pm\sqrt{1+4z}}{2}$, each of multiplicity 2.\footnote{We remark that $A^2$ does not depend on $y$, 
so it is not surprising that the eigenvalues of $A$ do not depend on $y$ either.}
It follows that
$$[y^0 z^j]\ \tr(A^{2n}) = 2\cdot [z^j]\ \left( \left(\frac{1+\sqrt{1+4z}}{2}\right)^{2n} + \left(\frac{1-\sqrt{1+4z}}{2}\right)^{2n} \right)\,.$$
We further remark that $\frac{1-\sqrt{1+4z}}{2} = -zC(-z)$ and $\frac{1+\sqrt{1+4z}}{2} =C(-z)^{-1}$, 
where $C(x) = \frac{1-\sqrt{1-4x}}{2x}$ is the ordinary generating function for Catalan numbers.

Since $j\leq n$ and $n>1$, we have $[z^j]\ (-zC(-z))^{2n} = 0$. On the other hand, since  $[x^k]\ C(x)^m = \frac{m}{2k+m}\binom{2k+m}{m}$ (e.g., see~\cite[formula (5.70)]{Graham1994}), 
we have
\[
\begin{split}
[z^j]\ C(-z)^{-2n} &= (-1)^j\frac{-2n}{2j-2n}\binom{2j-2n}{j} = \frac{2n}{2n-2j}\binom{2n-j-1}{j} \\
& = \frac{2n}{2n-2j}\binom{2n-j-1}{2n-2j-1} = \frac{2n}{2n-j}\binom{2n-j}{j},
\end{split}
\]
which concludes the proof.
\qed
\end{proof}

Lemma~\ref{th:Ay0zj} proves that our formula \eqref{eq:Mnmatrix} implies the Touchard formula \eqref{eq:Touchard}.
In the next section, we show that it also implies another (apparently new) formula for $M_n$.
But most importantly, the matrix formula~\eqref{eq:Mnmatrix} can be generalized for the ternary menage problem as we show in Section~\ref{sec:DeBter}.

\section{New Formulae for Menage Numbers and Their EGF}

We find it convenient to define the \emph{series Laplace transform} $\L_{x,y}$ of a function $f(x)$ as the conventional Laplace transform of $f(yt)$ (as a function of $t$) evaluated at $1$, i.e.,
$$\L_{x,y}[f] = \int_0^\infty f(yt)\cdot e^{-t}\cdot dt.$$
It can be easily seen that $\L_{x,y}[x^k] = k!\cdot y^k$ for any integer $k\geq 0$ and thus for a power series $f(x)$, we have
$$\L_{x,y}[f] = \sum_{k=0}^{\infty} k!\cdot y^k\cdot [x^k]\ f(x).$$
In particular,
$$\sum_{k=0}^{\infty} k!\cdot [x^k]\ f(x) = \L_{x,1}[f] = \int_0^\infty f(t)\cdot e^{-t}\cdot dt.$$

\begin{lemma}\label{lem:laplace}
Let $U,V$ be same-size square matrices that do not depend on indeterminate $z$. Then for any integer $n\geq 0$,
$$\sum_{j=0}^n (-1)^j\cdot (n-j)!\cdot [z^j]\ \tr((U+V\cdot z)^n) 
= \int_0^\infty \tr((U\cdot t - V)^n)\cdot e^{-t}\cdot dt\,.$$
\end{lemma}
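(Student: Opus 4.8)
The plan is to expand both sides as sums over ordered products of $U$ and $V$ and to recognize the integral on the right as a term-by-term Gamma integral. The essential observation is that, although $U$ and $V$ need not commute, the indeterminate $z$ (and later the integration variable $t$) is a scalar that commutes with everything, so in any monomial of the expansion the scalar weight depends only on \emph{how many} factors of $V$ occur, not on \emph{where} they occur.

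Concretely, I would first expand $(U+Vz)^n = \sum_{w} z^{|w|} M_w$, where $w$ ranges over binary strings in $\{0,1\}^n$, $|w|$ denotes its number of ones, and $M_w$ is the ordered product carrying a factor $V$ in each position marked by a one and a factor $U$ elsewhere. Taking the trace and collecting by the degree of $z$ gives $[z^j]\ \tr((U+Vz)^n) = \sum_{|w|=j}\tr(M_w)$; denote this quantity by $P_j$. Running the identical expansion on the second polynomial, now with $Ut$ in the role of $U$ and $-V$ in the role of $Vz$, yields the key identity
$$\tr((Ut-V)^n) = \sum_{j=0}^n (-1)^j\, t^{\,n-j}\, P_j,$$
because a monomial with $j$ factors of $-V$ and $n-j$ factors of $Ut$ contributes $(-1)^j t^{\,n-j} M_w$, and summing over all $w$ with $|w|=j$ produces $(-1)^j t^{\,n-j} P_j$.

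It then remains to integrate this polynomial identity against $e^{-t}$ over $[0,\infty)$ and to interchange the finite sum with the integral, which is legitimate since the sum has only $n+1$ terms. Applying the Gamma integral $\int_0^\infty t^{\,n-j}\, e^{-t}\,dt = (n-j)!$ turns each term $(-1)^j t^{\,n-j} P_j$ into $(-1)^j (n-j)!\, P_j$, and summing over $j$ reproduces exactly the left-hand side $\sum_{j=0}^n (-1)^j (n-j)!\,[z^j]\ \tr((U+Vz)^n)$, completing the argument.

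The only genuinely delicate point is the bookkeeping of the noncommutativity of $U$ and $V$: one must keep the products $M_w$ ordered and resist the temptation to treat $(U+Vz)^n$ as an ordinary commutative binomial. Once the scalar weights are cleanly isolated from the ordered matrix products, the remainder is a routine application of the Gamma integral, and I anticipate no analytic subtlety (the series Laplace transform $\L_{x,1}$ introduced above is precisely this integral, so the same computation could be phrased in that language if preferred).
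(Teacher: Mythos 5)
Your proof is correct and is essentially the paper's own argument: the paper likewise passes from $[z^j]\ \tr((U+Vz)^n)$ to $(-1)^j\,[z^{n-j}]\ \tr((Uz-V)^n)$ (which is exactly the coefficient identity your ordered-monomial expansion establishes, just stated without the explicit bookkeeping) and then finishes by applying the series Laplace transform $\L_{z,1}$, i.e.\ the same Gamma integral $\int_0^\infty t^{n-j}e^{-t}\,dt=(n-j)!$. Your explicit expansion over binary strings merely spells out the noncommutative justification that the paper leaves implicit, so the two proofs coincide in substance.
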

\begin{proof}
We have
$$[z^j]\ \tr((U+V\cdot z)^n) = [z^{n-j}]\ \tr((U\cdot z+V)^n) = (-1)^j\cdot [z^{n-j}]\ \tr((U\cdot z-V)^n)\,.$$
Hence
\[
\begin{split}
\sum_{j=0}^n (-1)^j\cdot (n-j)!\cdot [z^j]\ \tr((U+V\cdot z)^n) &= \sum_{j=0}^n (n-j)!\cdot [z^{n-j}]\ \tr((U\cdot z-V)^n)\\
& = \L_{z,1}[\tr((U\cdot z-V)^n)]\,,
\end{split}
\]
which concludes the proof.
\qed
\end{proof}

We are now ready to derive new closed-form expressions for the menage numbers $M_n$ and their exponential generating function.

\begin{theorem}\label{th:Mn}
For all integers $n>1$,
\beq\label{eq:Mnint}
M_n = 2\cdot n!\cdot \int_0^{\infty}  \left(  \left( \frac{t-2+\sqrt{t^2-4t}}{2} \right)^n + \left( \frac{t-2-\sqrt{t^2-4t}}{2} \right)^n \right)\cdot e^{-t}\cdot dt\,.
\eeq
Furthermore,
\begin{align}
\sum_{n=0}^{\infty} M_n \frac{x^n}{n!} &= -1 - 2x + 2\cdot \int_0^\infty \frac{x^2-1}{xt - (x+1)^2}\cdot e^{-t}\cdot dt\label{eq:gfint}\\
&= -1 + 2x + 2\cdot\frac{1-x}{1+x}\cdot \sum_{k=0}^{\infty} \frac{k!\cdot x^k}{(1+x)^{2k}}  \label{eq:gfser}\\
&= -1 + 2x + 2\cdot\frac{1-x^2}{x}\cdot e^{-\frac{(x+1)^2}x}\cdot \Ei\left(\frac{(x+1)^2}{x}\right)\,,  \label{eq:gfEi} 
\end{align}
where $\Ei(t)$ is the exponential integral.
\end{theorem}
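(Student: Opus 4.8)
The plan is to push the matrix identity \eqref{eq:Mnmatrix} through Lemma~\ref{lem:laplace} and then diagonalize. First I would compute $A^2$ explicitly. As the footnote to Lemma~\ref{th:Ay0zj} observes, $A^2$ does not involve $y$, and a direct calculation shows it is moreover \emph{linear} in $z$, so $A^2 = U + V\cdot z$ for two constant $0/1$ matrices $U,V$. Consequently $\tr(A^{2n}) = \tr((U+V z)^n)$ is $y$-free, the extraction $[y^0]$ in \eqref{eq:Mnmatrix} is vacuous, and \eqref{eq:Mnmatrix} becomes $M_n = n!\sum_{j=0}^n (-1)^j (n-j)!\,[z^j]\,\tr((U+V z)^n)$. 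This is exactly the left-hand side of Lemma~\ref{lem:laplace} (with exponent $n$), so I obtain the compact integral $M_n = n!\int_0^\infty \tr\bigl((U\cdot t - V)^n\bigr)\,e^{-t}\,dt$.

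It remains to evaluate $\tr\bigl((U t - V)^n\bigr)$. Setting $B = Ut-V$, I would reorder the basis so that the node pairs $\{fm, fm^\star\}$ and $\{mf, mf^\star\}$ separate; this permutation similarity block-diagonalizes $B$ into two identical $2\times 2$ blocks, each with characteristic polynomial $\lambda^2 - (t-2)\lambda + 1$. Hence $B$ has the two eigenvalues $\frac{t-2\pm\sqrt{t^2-4t}}{2}$, each of multiplicity $2$, and $\tr(B^n)$ equals twice the sum of their $n$-th powers. Substituting this into the integral gives \eqref{eq:Mnint} directly.

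For the generating function I would assemble $\sum_{n\ge 0} M_n x^n/n!$, using \eqref{eq:Mnint} for $n>1$ and inserting the genuine values $M_0$ and $M_1$ by hand; these differ from the $n=0,1$ outputs of the integral formula (which are $4$ and $-2$), and together with the constant term produced below they yield the polynomial prefactor in \eqref{eq:gfint}. Abbreviating the two eigenvalues as $r_\pm$ and using $r_++r_-=t-2$, $r_+ r_-=1$, the sum over $n$ of $\tr(B^n) x^n = 2(r_+^n+r_-^n)x^n$ collapses to the rational kernel $2\cdot\frac{2-(t-2)x}{1-(t-2)x+x^2}$; rewriting the denominator as $(x+1)^2-xt$ and reducing the integrand to $1+\frac{1-x^2}{(x+1)^2-xt}$ gives \eqref{eq:gfint}. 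From there, \eqref{eq:gfser} follows by expanding $\bigl((x+1)^2-xt\bigr)^{-1}$ as a geometric series in $t$ and integrating term by term against $e^{-t}$ via $\int_0^\infty t^k e^{-t}\,dt = k!$, simplifying $\frac{1-x^2}{(x+1)^2}=\frac{1-x}{1+x}$; and \eqref{eq:gfEi} follows by evaluating the same integral in closed form through the substitution $u=(x+1)^2-xt$, which reduces it to $\int_{-\infty}^{(x+1)^2/x}\frac{e^v}{v}\,dv = \Ei\bigl(\frac{(x+1)^2}{x}\bigr)$.

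The main obstacle is that this exponential generating function is only a \emph{formal} (asymptotic) series: because $M_n$ grows like $(n!)^2$, the coefficients $M_n/n!$ grow factorially, so the series in \eqref{eq:gfser} has radius of convergence $0$ and the interchange of $\sum_n$ with $\int_0^\infty$ is not literally legitimate. I would therefore prove \eqref{eq:gfser} as an identity of formal power series, by matching the coefficient of $x^n$ on each side --- which is precisely where $\int_0^\infty t^k e^{-t}\,dt = k!$ carries the argument --- and interpret the integral \eqref{eq:gfint} and its closed form \eqref{eq:gfEi} as the Borel sum of that series. The delicate point is that the integrand in \eqref{eq:gfEi} has a pole at $t=(x+1)^2/x>0$ lying on the path of integration, so the integral must be read as a Cauchy principal value, consistent with the principal-value definition of $\Ei$; maintaining this single interpretation across \eqref{eq:gfint}, \eqref{eq:gfser}, and \eqref{eq:gfEi} is the crux of making the three expressions rigorously agree.
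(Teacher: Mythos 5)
Your proof is correct and follows essentially the same route as the paper: compute $A^2 = U + V\cdot z$, feed \eqref{eq:Mnmatrix} through Lemma~\ref{lem:laplace}, read off the eigenvalues of $U\cdot t - V$, then collapse the resulting geometric kernel in $t$ and apply $\int_0^\infty t^k e^{-t}\,dt = k!$ term by term. The differences are minor but worth recording. First, you actually derive the spectrum of $U\cdot t - V$ (via the $\{fm,fm^{\star}\}$ / $\{mf,mf^{\star}\}$ block split, each block having characteristic polynomial $\lambda^2-(t-2)\lambda+1$), where the paper simply asserts the eigenvalues; your verification is sound. Second, for \eqref{eq:gfEi} the paper invokes the cited identity $\sum_{k\geq 0}k!\,y^k = \frac{e^{-1/y}}{y}\Ei\left(\frac{1}{y}\right)$ applied to \eqref{eq:gfser}, whereas you evaluate the integral in \eqref{eq:gfint} directly by the substitution $u=(x+1)^2-xt$; both work, and yours makes the principal-value reading of the integral explicit --- a point the paper silently sidesteps by treating everything through the formal series Laplace transform. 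Third, your bookkeeping of the $n=0,1$ corrections (genuine values $1,0$ versus integral outputs $4,-2$) produces the prefactor $-1+2x$, which agrees with \eqref{eq:gfser} and \eqref{eq:gfEi} but \emph{not} with the literal $-1-2x$ printed in \eqref{eq:gfint}; since the integral term of \eqref{eq:gfint} maps exactly onto the series term of \eqref{eq:gfser} under $t^k \mapsto k!$, the two prefactors must coincide, so the printed sign in \eqref{eq:gfint} is a typo in the paper and your $-1+2x$ is the correct value (checking the coefficients of $x^0$ and $x^1$ confirms this). Finally, your insistence that the EGF identities be read as formal power series (equivalently, Borel-sum) statements, because $M_n/n!$ grows factorially and \eqref{eq:gfser} has radius of convergence zero, is a legitimate rigor upgrade over the paper, which leaves this interpretation implicit.
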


\begin{proof} For the matrix $A$ defined in Fig.~\ref{fig:deBMen}, we have $A^2 = U + V\cdot z$, where 
$$
U = \begin{bmatrix}
 1&0&0&0\\
 0&1&0&0\\
 0&1&0&0\\
 1&0&0&0
 \end{bmatrix}
\quad\text{and}\quad
V = \begin{bmatrix}
 1&0&0&1\\
 0&1&1&0\\
 0&0&1&0\\
 0&0&0&1
 \end{bmatrix}\,.
$$
Then Lemma~\ref{lem:laplace} and formula~\eqref{eq:Mnmatrix} imply 
$$M_n = n!\cdot \int_0^\infty \tr((U\cdot t - V)^n)\cdot e^{-t}\cdot dt\,.$$
Since the eigenvalues of the matrix $U\cdot t - V$ are $\frac{t-2\pm\sqrt{t^2-4t}}{2}$, each of multiplicity 2, we obtain formula \eqref{eq:Mnint}.

To derive \eqref{eq:gfint} from \eqref{eq:Mnint}, we notice that
$$\sum_{n=0}^{\infty} \left( \left( \frac{t-2+\sqrt{t^2-4t}}{2} \right)^n + \left( \frac{t-2-\sqrt{t^2-4t}}{2} \right)^n\right)\cdot x^n = 1+\frac{x^2-1}{xt - (x+1)^2}$$
and take special care of the initial values $M_0=1$ and $M_1=0$.
Expanding the last expression as a power series in $t$, we get
$$\frac{x^2-1}{xt - (x+1)^2} = \frac{1-x}{1+x}\cdot \frac{1}{1-\frac{x}{(x+1)^2}t} = \frac{1-x}{1+x}\cdot \sum_{k=0}^{\infty} \frac{x^k}{(x+1)^{2k}}\cdot t^k.$$
Applying the series Laplace transform $\L_{t,1}$, we obtain \eqref{eq:gfser}.
Expression \eqref{eq:gfEi} now follows from \eqref{eq:gfser}, since $\sum_{k=0}^{\infty} k!\cdot y^k = \frac{e^{-1/y}}{y}\cdot \Ei\left(\frac{1}{y}\right)$ (e.g., see~\cite[formula (1.1.7)]{Bleistein2010}).
\qed
\end{proof}

\section{De Bruijn Graph Approach for Ternary Menage Problem}\label{sec:DeBter}

In contrast to the menage problem, in the ternary case two females or two males can sit next to each other. 
Hence, the de Bruijn graph in this case can be obtained from 
the de Bruijn graph for the menage problem by adding two more nodes labeled $ff$ and $mm$, connected to the other nodes following the same rules (Fig.~\ref{fig:deBTri}).

Let $T_n$ be the number of seating arrangements of $n$ couples in the ternary menage problem.

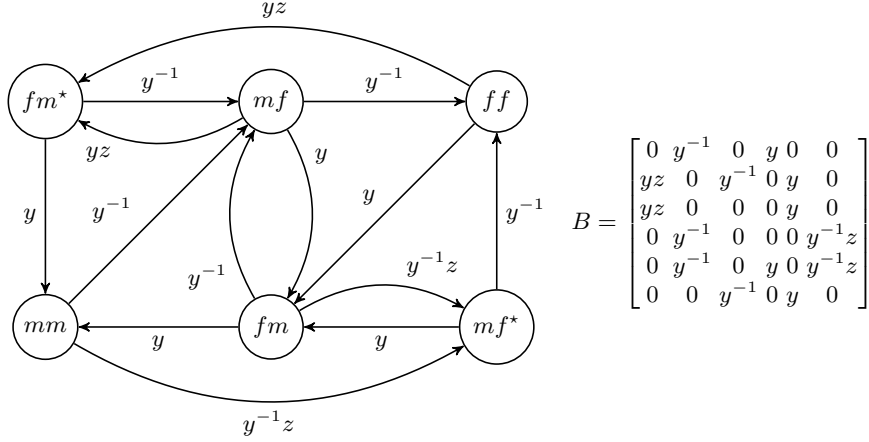
\begin{figure}[t]
\begin{center}
\begin{tabular}{ccc}
\begin{tabular}{c}
\begin{tikzpicture}[->,>=stealth',auto,node distance=3cm,semithick]

  \node[state] (FM)                    {$fm^{\star}$};
  \node[state] (mf) [right of=FM] {$mf$};
  \node[state] (ff) [right of=mf] {$ff$};
  \node[state] (mm) [below of=FM] {$mm$};
  \node[state] (fm) [right of=mm] {$fm$};
  \node[state] (MF) [right of=fm] {$mf^{\star}$};

  \path (fm) edge [bend left] node[near start] {$y^{-1}$} (mf)
        (mf) edge [bend left] node[near start] {$y$} (fm)
        (mf) edge node {$y^{-1}$} (ff)
        (fm) edge node {$y$} (mm)

        (ff) edge [bend right] node[swap] {$yz$} (FM)
        (mm) edge [bend right] node[swap] {$y^{-1}z$} (MF)
	(FM) edge node[swap] {$y$} (mm)
	(FM) edge node {$y^{-1}$} (mf)
        (mf) edge [bend left] node[near end] {$yz$} (FM)

	(MF) edge node[swap] {$y^{-1}$} (ff)
	(MF) edge node [bend left] {$y$} (fm)
        (fm) edge [bend left] node[pos=0.6] {$y^{-1}z$} (MF)

        (mm) edge node[pos=0.4] {$y^{-1}$} (mf)
        (ff) edge node[swap] {$y$} (fm);
\end{tikzpicture}
\end{tabular}
&
\qquad
&
$B= \begin{bmatrix} 
0 & y^{-1} & 0 & y & 0 & 0\\
yz & 0 & y^{-1} & 0 & y & 0\\
yz & 0 & 0 & 0 & y & 0 \\
0 & y^{-1} & 0 & 0 & 0 & y^{-1}z\\
0 & y^{-1} & 0 & y & 0 & y^{-1}z\\
0 & 0 & y^{-1} & 0 & y & 0 
\end{bmatrix}
$
\end{tabular}
\end{center}
\caption{The weighted de Bruijn graph for the ternary menage problem and its adjacency matrix $B$.}
\label{fig:deBTri}
\end{figure}

\begin{theorem}\label{th:Tn}
For $n>1$, the number $T_n$ can be computed in the following ways:
\beq\label{eq:Tnmatrix}
T_n = n!\cdot \sum_{j=0}^n (-1)^j\cdot (n-j)!\cdot [y^0z^j]\ \tr(B^{2n})\,,
\eeq
where $B$ is defined in Fig.~\ref{fig:deBTri}; or
\beq\label{eq:TnC}
T_n = n!\cdot\int_0^{\infty} [y^n]\ \tr(B_2^{n})\cdot e^{-t}\cdot dt\,,
~~
B_2 = \begin{bmatrix}
-y& yt& t& 0& yt& -y \\
-y& y(t - 1)& 0& y^2(t - 1)& yt& -y \\
0& y(t - 1)& 0& y^2(t - 1)& 0& -y \\
-y& 0& t - 1& 0& y(t - 1)& 0 \\
-y& yt& t - 1& 0& y(t - 1)& -y \\
-y& yt& 0& y^2t& yt& -y
\end{bmatrix}\,;
\eeq
or
\beq\label{eq:Tnxy}
T_n = n!\cdot\int_0^{\infty} [x^ny^n]\ \frac{a(xy,t) + b(xy,t)\cdot (x+xy^2)}{c(xy,t)+d(xy,t)\cdot (x+xy^2)}\cdot e^{-t}\cdot dt\,,
\eeq
where 
\[
\begin{split}
a(p,t) &= -2\,{p}^{5}{t}^{3}+2\,{p}^{4}{t}^{4}+4\,{p}^{5}{t}^{2}-8\,{p}^{4}{t}^{3}-2\,{p}^{5}t+12\,{p}^{4}{t}^{2}-8\,{p}^{4}t+6\,{p}^{3}t-4\,{p}^{2}{t}^{2}\\
&+16\,{p}^{2}-10\,pt+20\,p+6 \,, \\
b(p,t) &= -p^2t(2+p-t)(p-3\,t+6)\,,\\
c(p,t) &= {p}^{6}{t}^{2}-2\,{p}^{5}{t}^{3}+{p}^{4}{t}^{4}+4\,{p}^{5}{t}^{2}-4\,{p}^{4}{t}^{3}-2\,{p}^{5}t+6\,{p}^{4}{t}^{2}-4\,{p}^{4}t+2\,{p}^{3}t-{p}^{2}{t}^{2}\\
&+4\,{p}^{2}-2\,pt+4\,p+1 \,, \\
d(p,t) &= -p^2t(2+p-t)^2\,.
\end{split}
\]
\end{theorem}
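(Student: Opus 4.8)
The plan is to establish the three formulas in turn, reusing as much of the classical machinery as possible.

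\emph{Formula \eqref{eq:Tnmatrix}.} The derivation is entirely parallel to that of \eqref{eq:Mnmatrix}. The ternary de Bruijn graph of Fig.~\ref{fig:deBTri} encodes cyclic gender patterns on $2n$ seats: a closed walk of length $2n$ reads off the cyclic sequence of suffixes, the absent self-loops at $ff$ and $mm$ enforce the ``no three equal genders in a row'' constraint, the indeterminate $y$ tracks the male--female balance, and $z$ tracks the starred (close-couple) nodes. Hence $[y^0z^j]\tr(B^{2n})$ counts the admissible patterns with $n$ males, $n$ females and $j$ designated close pairs. Since every close couple occupies one male and one female seat, the people-placement bookkeeping is identical to the menage case: multiplying by $j!\,(n-j)!^2$ and applying inclusion--exclusion over the $\binom nj$ choices of which couples are declared close collapses $\binom nj\, j!\,(n-j)!^2 = n!\,(n-j)!$ and yields \eqref{eq:Tnmatrix}.

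\emph{Formula \eqref{eq:TnC}.} The key preliminary is that $B^2$ is \emph{linear} in $z$: an arc carries a factor $z$ exactly when its head is starred, and one cannot leave a starred node along another starred arc (the shared person would belong to two couples), so no product of two arcs produces $z^2$. Writing $B^2=U'+V'z$ with $U',V'$ free of $z$, I would apply Lemma~\ref{lem:laplace} inside the commuting operator $[y^0]$ to get $T_n=n!\int_0^\infty[y^0]\tr((U't-V')^n)\,e^{-t}\,dt$. To pass from $[y^0]$ to $[y^n]$ I would use the grading: an arc into $vw$ carries $y^{\pm1}$ according to its suffix $w$, and in a two-step product the head's prefix is forced, so each column $k$ of $M:=U't-V'$ is $y$-homogeneous of an \emph{even} degree $\mu_k$ (explicitly $\mu=(0,0,-2,2,0,0)$ in the node order of Fig.~\ref{fig:deBTri}). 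Setting $B_2:=yM\cdot\operatorname{diag}(y^{-\mu_k/2})$ multiplies column $k$ by $y^{1-\mu_k/2}$, producing exactly the denominator-free matrix displayed in \eqref{eq:TnC}. For a closed walk of length $n$ the base term $\prod_s M_{i_si_{s+1}}$ has $y$-degree $\sum_s\mu_{i_s}$ while the inserted factors contribute $y^{\,n-\frac12\sum_s\mu_{i_s}}$; on a \emph{balanced} walk ($\sum_s\mu_{i_s}=0$) the base term contributes to $[y^0]\tr(M^n)$ and the total degree is exactly $n$, whereas unbalanced walks shift the degree away from $n$. Hence $[y^n]\tr(B_2^n)=[y^0]\tr(M^n)$, which is \eqref{eq:TnC}.

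\emph{Formula \eqref{eq:Tnxy}.} Summing the transfer matrix gives $\sum_{n\geq0}\tr(B_2^n)x^n=\tr((I-xB_2)^{-1})$, so $[y^n]\tr(B_2^n)=[x^ny^n]\,\tr((I-xB_2)^{-1})$, and it suffices to identify the resolvent $\tr(\operatorname{adj}(I-xB_2))/\det(I-xB_2)$ with the displayed rational function. Two structural facts pin down its shape. The $f\leftrightarrow m$ involution, realized by the permutation $\Pi=(1\,6)(2\,5)(3\,4)$ together with $y\mapsto y^{-1}$, satisfies $B_2(y^{-1})=y^{-2}\,\Pi B_2(y)\Pi^{-1}$; consequently both $\det(I-xB_2)$ and $\tr(\operatorname{adj}(I-xB_2))$ are invariant under $(x,y)\mapsto(xy^2,y^{-1})$, whose invariant field is generated by $p=xy$ and $x+xy^2$. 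Moreover the column $y$-degrees of $B_2$ are $(1,1,0,2,1,1)$, so every monomial $x^ky^m$ occurring in these two polynomials obeys $m-k\in\{-1,0,1\}$; grouping the three layers and imposing the invariance forces $\det(I-xB_2)=c(p,t)+d(p,t)(x+xy^2)$ and $\tr(\operatorname{adj}(I-xB_2))=a(p,t)+b(p,t)(x+xy^2)$. Extracting the diagonal coefficient $[x^ny^n]$ of the resolvent and substituting into \eqref{eq:TnC} then gives \eqref{eq:Tnxy}.

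The conceptual content lies in the grading-and-symmetry analysis of the last two parts. The main obstacle is the explicit symbolic linear algebra behind \eqref{eq:Tnxy}: expanding $\det(I-xB_2)$ and $\tr(\operatorname{adj}(I-xB_2))$ as polynomials in $x,y,t$ and reading off the four coefficient polynomials $a,b,c,d$ in the invariants $p=xy$ and $x+xy^2$. This is routine in principle but heavy by hand, and is where I would rely on a computer algebra system.
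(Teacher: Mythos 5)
Your proposal is correct and follows essentially the same route as the paper: inclusion--exclusion over close couples for \eqref{eq:Tnmatrix}; Lemma~\ref{lem:laplace} combined with the $y$-homogeneity of the columns of $B^2$ for \eqref{eq:TnC} (your column rescaling produces exactly the same matrix as the paper's passage to $(yB)^2$ followed by the substitution $y^2\mapsto y$); and the transfer-matrix identity $\sum_{n\geq 0} \tr(B_2^n)\,x^n = \tr(\operatorname{adj}(I-xB_2))/\det(I-xB_2) = 6 - xQ'(x)/Q(x)$ for \eqref{eq:Tnxy}. The only real difference is that where the paper simply states that ``direct computation shows'' the numerator and denominator have the form $a+b\cdot(x+xy^2)$ and $c+d\cdot(x+xy^2)$ with coefficients in $p=xy$ and $t$, you derive this shape a priori from the $f\leftrightarrow m$ involution and the column-degree layering --- a nice structural complement, though like the paper you still delegate the identification of the explicit polynomials $a,b,c,d$ to a computer algebra system.
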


\begin{proof}
Formula~\eqref{eq:Tnmatrix} is similar to \eqref{eq:Mnmatrix} and follows directly from the definition of the de Bruijn graph in Fig.~\ref{fig:deBTri}.

To avoid dealing with negative powers, we notice that $[y^0z^j]\ \tr(B^{2n})= [y^{2n}z^j]\ \tr((yB)^{2n})$. 
Furthermore, the matrix $(yB)^2$ has entries that are polynomial in $y^2$ and $z$ with the degree in $z$ being at most 1,
that is $(yB)^2 = U + V\cdot z$, where matrices $U,V$ do not depend on $z$. Since the specified matrix $B_2$ equals $U\cdot t - V$ where $y^2$ is replaced with $y$,
formula~\eqref{eq:TnC} easily follows from \eqref{eq:Tnmatrix} and Lemma~\ref{lem:laplace}.

According to \cite[Corollary 4.7.3]{Stanley1997}, 
$$\sum_{n=0}^{\infty} \tr(B_2^{n})\cdot x^n = 6-\frac{xQ'(x)}{Q(x)},$$
where $Q(x) = \det(I - x\cdot B_2)$ and $I$ is the $6\times 6$ identity matrix. 
Direct computation shows that $Q(x) = c(xy,t)+d(xy,t)\cdot (x+xy^2)$ and $6Q(x)-xQ'(x)=a(xy,t) + b(xy,t)\cdot (x+xy^2)$, implying that
$$\sum_{n=0}^{\infty} \tr(B_2^{n})\cdot x^n = \frac{a(xy,t) + b(xy,t)\cdot (x+xy^2)}{c(xy,t)+d(xy,t)\cdot (x+xy^2)}\,.$$
Substitution of this expression into \eqref{eq:TnC} yields \eqref{eq:Tnxy}.
\qed
\end{proof}

While formulae~\eqref{eq:Tnmatrix} and \eqref{eq:TnC} provide an efficient way for computing $T_n$ for a given integer $n>1$, 
the special form of the rational function in \eqref{eq:Tnxy} further enables us to obtain a closed-form expression for the EGF for the numbers $T_n$.

\begin{lemma}\label{lem:abcd}
Let $a(z)$, $b(z)$, $c(z)$, $d(z)$ be polynomials such that $[z^0]\ c(z) = 1$ (i.e., $c(z)$ is invertible as a series in $z$).
Then for any integer $n\geq 0$,
$$
[x^ny^n]\ \frac{a(xy) + b(xy)\cdot (x+xy^2)}{c(xy) + d(xy)\cdot(x+xy^2)} = [p^n]\ \left(\frac{a(p)\cdot d(p)-b(p)\cdot c(p)}{d(p)\cdot \sqrt{c(p)^2  - 4\cdot p^2\cdot d(p)^2}} + \frac{b(p)}{d(p)}\right) \,.
$$
\end{lemma}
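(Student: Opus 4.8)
The plan is to extract the diagonal coefficient $[x^n y^n]$ by exploiting the fact that the rational function depends on $x$ and $y$ only through the two combinations $p = xy$ and $w = x + xy^2 = x(1+y^2)$. For fixed $p$, I would first regard the fraction as a degree-one rational function of $w$ and carry out the division
$$\frac{a(p) + b(p)\,w}{c(p) + d(p)\,w} = \frac{b(p)}{d(p)} + \frac{a(p)d(p) - b(p)c(p)}{d(p)}\cdot \frac{1}{c(p) + d(p)\,w}\,,$$
where every quotient by $d(p)$ is read as a formal Laurent series in $p$ (harmless, since only nonnegative powers of $p$ are eventually extracted). The term $b(p)/d(p)$ depends on $x,y$ only through $p=xy$, so it contributes exactly $[p^n]\,b(p)/d(p)$ to the diagonal, which is the second summand on the right-hand side.

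For the remaining term I would expand the last factor as a geometric series in $w$, using the hypothesis $[z^0]c(z)=1$ so that $c(p)$ is invertible:
$$\frac{1}{c(p) + d(p)\,w} = \sum_{k=0}^{\infty} \frac{(-d(p))^k}{c(p)^{k+1}}\,w^k\,.$$
Writing $H(p) = \frac{a(p)d(p) - b(p)c(p)}{d(p)}$, the next step is the actual diagonal extraction applied termwise. The prefactor $H(p)\frac{(-d(p))^k}{c(p)^{k+1}}$ is a series in $p=xy$ and therefore contributes equal powers of $x$ and $y$, whereas $w^k = x^k(1+y^2)^k$ contributes $k$ powers of $x$ against $2m$ powers of $y$ coming from the term $\binom{k}{m}y^{2m}$ of $(1+y^2)^k$. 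Balancing the total exponents of $x$ and $y$ forces $k=2m$ to be even, selects the central binomial coefficient $\binom{2m}{m}$, and pins the power of $p$ to $n-2m$. This bookkeeping is the crux of the argument and the step I expect to be the main obstacle to state cleanly; everything around it is routine. It yields
$$[x^n y^n]\left(H(p)\cdot \frac{1}{c(p)+d(p)w}\right) = \sum_{m=0}^{\infty}\binom{2m}{m}[p^{n-2m}]\left(H(p)\frac{d(p)^{2m}}{c(p)^{2m+1}}\right) = [p^n]\,\frac{H(p)}{c(p)}\sum_{m=0}^{\infty}\binom{2m}{m}\left(\frac{p^2 d(p)^2}{c(p)^2}\right)^{m}\,.$$

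Finally I would collapse the central-binomial series using the identity $\sum_{m\geq 0}\binom{2m}{m}t^m = (1-4t)^{-1/2}$ with $t = p^2 d(p)^2/c(p)^2$ (the square-root branch being fixed by $c(0)=1$), turning the inner sum into $c(p)/\sqrt{c(p)^2 - 4p^2 d(p)^2}$. After cancelling $c(p)$ and substituting back the definition of $H(p)$, this term becomes $[p^n]\,\frac{a(p)d(p)-b(p)c(p)}{d(p)\sqrt{c(p)^2 - 4p^2 d(p)^2}}$; adding the contribution $[p^n]\,b(p)/d(p)$ from the first term reproduces exactly the claimed right-hand side.
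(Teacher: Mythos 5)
Your proof is correct and takes essentially the same route as the paper's: substitute $p=xy$, expand geometrically in $w=x+xy^2$, observe that diagonal extraction forces even powers of $w$ and produces central binomial coefficients, then collapse via $\sum_{m\geq 0}\binom{2m}{m}t^m=(1-4t)^{-1/2}$. The only difference is cosmetic: you perform the division $\frac{a+bw}{c+dw}=\frac{b}{d}+\frac{a d-b c}{d}\cdot\frac{1}{c+dw}$ up front, whereas the paper expands the $a$ and $b$ parts as two separate geometric sums and recombines them into that same decomposition at the end.
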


\begin{proof}
Let $p=xy$. Then $\frac{a(xy) + b(xy)\cdot (x+xy^2)}{c(xy) + d(xy)\cdot(x+xy^2)} = \tfrac{a(p) + b(p)\cdot (x+py)}{c(p) + d(p)\cdot(x+py)}$. 
In the series expansion of this function, we will discard all terms with distinct degrees in $x$ and $y$, while in the remaining terms (with equal degrees in $x$ and $y$), 
we will replace $xy$ with $p$ to obtain a univariate power series in $p$.
We start with the following expansion:
\[
\begin{split}
& \frac{a(p) + b(p)\cdot (x+py)}{c(p) + d(p)\cdot(x+py)} 
= \frac{a(p) + b(p)\cdot (x+py)}{c(p)} \sum_{k=0}^{\infty} \left(\frac{-d(p)}{c(p)}\right)^k (x+py)^k \\
& \qquad\qquad\qquad = \frac{a(p)}{c(p)} \sum_{k=0}^{\infty} \left(\frac{-d(p)}{c(p)}\right)^k (x+py)^k + \frac{b(p)}{c(p)} \sum_{k=0}^{\infty} \left(\frac{-d(p)}{c(p)}\right)^k (x+py)^{k+1}\,.
\end{split}
\]
Here from each power of $x+py$ we extract the term with the equal degrees in $x$ and $y$ and replace it with the corresponding power of $p$. This yields
\[
\begin{split}
&\frac{a(p)}{c(p)} \sum_{k=0}^{\infty} \left(\frac{-d(p)}{c(p)}\right)^{2k} \binom{2k}{k} p^{2k} + 
\frac{b(p)}{c(p)} \sum_{k=0}^{\infty} \left(\frac{-d(p)}{c(p)}\right)^{2k+1}  \binom{2k+2}{k+1} p^{2k+2} \\
&= \frac{a(p)}{c(p)}\cdot f\left( \left(\frac{d(p)}{c(p)}p\right)^2\right) - \frac{b(p)}{d(p)}\cdot \left( f\left( \left(\frac{d(p)}{c(p)}p\right)^2\right) - 1\right) \\
&= \frac{a(p)\cdot d(p)-b(p)\cdot c(p)}{c(p)\cdot d(p)}\cdot f\left( \left(\frac{d(p)}{c(p)}\cdot p\right)^2\right) + \frac{b(p)}{d(p)} \\
&= \frac{a(p)\cdot d(p)-b(p)\cdot c(p)}{d(p)\cdot\sqrt{c(p)^2 - 4\cdot p^2\cdot d(p)^2}} + \frac{b(p)}{d(p)}\,,
\end{split}
\]
where $f(z) = (1-4z)^{-1/2} = \sum_{k=0}^{\infty} \binom{2k}{k}\cdot z^k$. 
By construction, the coefficient of $x^ny^n$ in the original expression equals the coefficient of $p^n$ in the last expression.
\qed
\end{proof}

\begin{theorem}\label{th:Tegf} 
Let polynomials $a(p,t)$, $b(p,t)$, $c(p,t)$, and $d(p,t)$ be defined as in Theorem~\ref{th:Tn}.
Then for all $n>1$,
\beq\label{eq:Tnpn}
T_n = 
n!\cdot\int_0^{\infty} [p^n]\ \left(\frac{a(p,t)\cdot d(p,t)-b(p,t)\cdot c(p,t)}{d(p,t)\cdot \sqrt{c(p,t)^2  - 4\cdot p^2\cdot d(p,t)^2}} + \frac{b(p,t)}{d(p,t)}\right) \cdot e^{-t}\cdot dt\,.
\eeq
Correspondingly, the exponential generating function for $T_n$ equals
\[
\begin{split}
& \sum_{n=0}^{\infty} T_n \frac{x^n}{n!} = - 2 + 2\cdot x - 2\cdot x\cdot e^{-x-2}\cdot \Ei(x+2) \\
& + 
\int_0^{\infty} 
\frac {\left( {t}^{3}{x}^{2}+ \left( -2\,{x}^{3}-4\,{x}^{2}-x \right) {t}^{2}+ \left( {x}^{4}+4\,{x}^{3}+7\,{x}^{2}+4\,x+3 \right) t-6\,{x}^{2}-9\,x-6 \right)\cdot e^{-t}}{(t-(x+2))
\sqrt{\left( {t}^{2}{x}^{2}-t{x}^{3}-2\,t{x}^{2}-3\,xt+4\,{x}^{2}+4\,x+1 \right)
\left( {t}^{2}{x}^{2}-t{x}^{3}-2\,t{x}^{2}+xt+1 \right)}}\ dt\,.
\end{split}
\]
\end{theorem}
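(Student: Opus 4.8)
The plan is to obtain \eqref{eq:Tnpn} as an immediate corollary of the earlier results and then to extract the EGF by summing the per-$n$ identity against $x^n/n!$ and evaluating the resulting $t$-integral in closed form. Formula \eqref{eq:Tnpn} follows by applying Lemma~\ref{lem:abcd} to the integrand of \eqref{eq:Tnxy}: the polynomials $a(p,t),b(p,t),c(p,t),d(p,t)$ satisfy $[p^0]\,c(p,t)=c(0,t)=1$, so Lemma~\ref{lem:abcd} (with $t$ treated as a parameter) rewrites $[x^ny^n]$ of the rational integrand as $[p^n]$ of $\frac{ad-bc}{d\sqrt{c^2-4p^2d^2}}+\frac bd$, and substituting this into \eqref{eq:Tnxy} gives \eqref{eq:Tnpn} directly.

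The heart of the argument is to simplify the integrand $G(p,t)=\frac{ad-bc}{d\sqrt{c^2-4p^2d^2}}+\frac bd$. The starting point is the factorization $d(p,t)=-p^2t\,(t-(p+2))^2$, after which I would establish, by direct polynomial computation, the three identities
\[
c+2pd=F_1F_2,\qquad c-2pd=R^2,\qquad \frac{ad-bc}{dR}=\frac{N}{t-(p+2)},
\]
where $F_1,F_2$ are the two quadratics in $t$ appearing under the root of the claimed EGF, $N$ is the cubic in $t$ appearing in its numerator, and $R=tp^3+(2t-t^2)p^2+(2-t)p+1$. The first two identities give $c^2-4p^2d^2=(c-2pd)(c+2pd)=R^2F_1F_2$, hence (choosing the branch equal to $1$ at $p=0$, the same one that the generating function $(1-4w)^{-1/2}$ of Lemma~\ref{lem:abcd} selects) $\sqrt{c^2-4p^2d^2}=R\sqrt{F_1F_2}$; combined with the third identity this collapses the first term of $G$ to $\frac{N}{(t-(p+2))\sqrt{F_1F_2}}$. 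The elementary part simplifies by polynomial division to $\frac bd=3+\frac{2p}{t-(p+2)}$.

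Next I would assemble the EGF. For $n>1$, \eqref{eq:Tnpn} reads $T_n/n!=\int_0^\infty[p^n]\,G(p,t)\,e^{-t}\,dt$; writing $S_n=n!\int_0^\infty[p^n]G(p,t)\,e^{-t}\,dt$ for all $n\ge0$, we have $T_n=S_n$ for $n>1$. Interchanging summation and integration (so that the per-$n$ sum collapses to the substitution $p\mapsto x$) gives $\sum_{n\ge0}S_n x^n/n!=\int_0^\infty G(x,t)\,e^{-t}\,dt$, whence
\[
\sum_{n\ge0}T_n\frac{x^n}{n!}=\int_0^\infty G(x,t)\,e^{-t}\,dt+(T_0-S_0)+(T_1-S_1)\,x.
\]
Using $\int_0^\infty\frac{e^{-t}}{t-(x+2)}\,dt=-e^{-x-2}\Ei(x+2)$, the elementary part integrates to $3-2x\,e^{-x-2}\Ei(x+2)$, supplying exactly the $\Ei$-term of the claimed formula, while the $\sqrt{F_1F_2}$-part reproduces the stated $t$-integral. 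It then remains to pin down the boundary corrections: $T_0=1$, $T_1=0$, and a short computation of the constant and linear coefficients of $G(p,t)$ gives $S_0=\int_0^\infty G(0,t)e^{-t}\,dt=6$ (indeed $G(0,t)=\frac{3(t-2)}{t-2}+3=6$) and $S_1=-2$; assembling $3+(T_0-S_0)=-2$ and $T_1-S_1=2$ yields the leading terms $-2+2x$.

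The main obstacle is twofold. The genuinely delicate analytic point is the legitimacy of the term-by-term interchange of the $x$-summation with the $t$-integral, together with the principal-value interpretation forced by the singularity at $t=2$ (at $t=x+2$ before expansion in $p$); this must be justified for small $|x|$ exactly as in the menage case of Theorem~\ref{th:Mn}, and then extended by analytic continuation. The main computational burden is verifying the three displayed polynomial identities — in particular that $c-2pd$ is a perfect square — which, although routine, is where essentially all of the structure of the matrix $B_2$ is consumed.
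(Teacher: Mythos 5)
Your proposal is correct and takes essentially the same approach as the paper: \eqref{eq:Tnpn} is obtained from Lemma~\ref{lem:abcd} applied to \eqref{eq:Tnxy}; the EGF follows by summing over $n$ with the initial-value correction $-5+2x$ (your $(T_0-S_0)+(T_1-S_1)\,x$ with $S_0=6$, $S_1=-2$); and the rational term integrates to $3-2x\,e^{-x-2}\Ei(x+2)$, which the paper gets by expanding $b/d$ in powers of $t$ and applying the series Laplace transform, an equivalent route to your principal-value identity $\int_0^\infty e^{-t}/(t-(x+2))\,dt=-e^{-x-2}\Ei(x+2)$. The only difference is explicitness: the polynomial identities $c+2pd=F_1F_2$, $c-2pd=R^2$ (with $R=tp^3+(2t-t^2)p^2+(2-t)p+1$), and $(ad-bc)/(dR)=N/(t-(p+2))$, which you state and which indeed check out, are exactly the ``direct computation'' the paper performs silently when it passes from \eqref{eq:Tgf} to the displayed square-root integral.
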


\begin{proof}
Formula \eqref{eq:Tnpn} directly follows from \eqref{eq:Tnxy} and Lemma~\ref{lem:abcd}.
Multiplying \eqref{eq:Tnpn} by $\frac{x^n}{n!}$ and summing over $n$, we further get
\beq\label{eq:Tgf}
\sum_{n=0}^{\infty} T_n \frac{x^n}{n!} = -5 + 2x 
+ \int_0^{\infty} \left(\frac{a(x,t)\cdot d(x,t)-b(x,t)\cdot c(x,t)}{d(x,t)\cdot \sqrt{c(x,t)^2  - 4\cdot p^2\cdot d(x,t)^2}} + \frac{b(x,t)}{d(x,t)}\right) \cdot e^{-t}\cdot dt\,,
\eeq
where the terms $-5 + 2x$ take care of the initial values $T_0=1$ and $T_1=0$. 
While we are not aware if it is possible to simplify the integral of the term involving a square root, below we evaluate the integral of the rational term.

Expansion of $\frac{b(x,t)}{d(x,t)}$ as a power series in $t$ yields
$$\frac{b(x,t)}{d(x,t)} = \frac{x-3t+6}{x-t+2} = 3 - \frac{2x}{2+x}\cdot\frac{1}{1-\frac{t}{2+x}} = 3 - \frac{2x}{2+x}\cdot \sum_{k=0}^{\infty} \frac{t^k}{(2+x)^k}.$$
Similarly to the proof of Theorem~\ref{th:Mn}, this further allows us to evaluate the integral
$$
\int_0^{\infty} \frac{b(x,t)}{d(x,t)} \cdot e^{-t}\cdot dt = 3 - 2\cdot x\cdot e^{-x-2}\cdot \Ei(x+2).
$$
Plugging this expression into \eqref{eq:Tgf} completes the proof.
\qed
\end{proof}

\section{Computing Numerical Values}

The Online Encyclopedia of Integer Sequences~\cite{OEIS} contains a number of sequences related to the menage problem and its ternary variant:

\

\begin{tabular}{|c|l|c|}
\hline
Sequence & Terms for $n=1,2,3,\dots$ & OEIS index\\
\hline\hline
$M_n$ & $0, 0, 12, 96, 3120, 115200, 5836320, 382072320, \dots$  & \texttt{A059375} \\
\hline
$\nicefrac{M_n}{2n!}$ & $0, 0, 1, 2, 13, 80, 579, 4738, 43387, 439792, \dots$ & \texttt{A000179} \\
\hline
$\nicefrac{M_n}{2n}$ & $0, 0, 2, 12, 312, 9600, 416880, 23879520, \dots$ & \texttt{A094047} \\
\hline\hline
$T_n$ & $0, 8, 84, 3456, 219120, 19281600, 2324085120, \dots$ & \texttt{A258338} \\
\hline
$\nicefrac{T_n}{4n}$ & $0, 1, 7, 216, 10956, 803400, 83003040, \dots$  & \texttt{A114939} \\
\hline
\end{tabular}

\

While the Touchard formula \eqref{eq:Touchard} can be used to efficiently compute the menage numbers $M_n$ and associated sequences, 
our formula \eqref{eq:Tnmatrix} enables the same for the numbers $T_n$. In particular, we have computed many terms of sequences \texttt{A114939} and \texttt{A258338} in the OEIS.

We also remark that the formula \eqref{eq:Tnpn} provides another way to compute $T_n$ by extracting the coefficient of $x^p$ (which is a polynomial in $t$) and applying the transform $\L_{t,1}$ 
(i.e., replacing every $t^k$ with $k!$).

\bibliographystyle{splncs03} 
\bibliography{math.bib}

\end{document}